\definecolor{webgreen}{rgb}{0,.5,0}
\definecolor{webbrown}{rgb}{.6,0,0}
\begin{document}

\theoremstyle{plain}
\newtheorem{theorem}{Theorem}
\newtheorem{corollary}[theorem]{Corollary}
\newtheorem{lemma}[theorem]{Lemma}
\newtheorem{proposition}[theorem]{Proposition}

\theoremstyle{definition}
\newtheorem{definition}[theorem]{Definition}
\newtheorem{example}[theorem]{Example}
\newtheorem{conjecture}[theorem]{Conjecture}

\theoremstyle{remark}
\newtheorem{remark}[theorem]{Remark}

\begin{center}
\vskip 1cm{\LARGE\bf Connecting the Stirling numbers and \textit{k}-bonacci sums}

\vskip 1cm
\large

Muhammad Adam Dombrowski\\
Brown University\\
Providence, RI 02912\\
 USA\\
\href{mailto:muhammad_adam_dombrowski@brown.edu}{\tt muhammad\_adam\_dombrowski@brown.edu}\\ 
\end{center}
\vskip0.2in

\begin{abstract}
    This paper proves why the Stirling numbers show up in a experimentally determined formula for the $k$-bonaccis. We develop a  bijection between a previously determined summation formula for $k$-bonaccis and an experimentally determined formula, proven algebraically.
\end{abstract}

\section{Introduction}

The Fibonacci sequence has been studied for centuries. Surprisingly, its namesake, Fibonacci, did not know about these numbers until the 12th century, while it had been discovered in India more than a thousand years beforehand.

The premise of the Fibonacci sequence\footnote{This sequence appears on the OEIS\cite{oeis} at \href{https://oeis.org/A000045}{A000045}. Since there are so many OEIS references in this paper, the main website is cited and each individual sequence under the site is hyperlinked.} is simple. We define $F_0$ as 0 and $F_1$ as 1. From there the sequence states that $F_2$ is the sum of $F_0$ and $F_1$, $F_3$ is the sum of $F_1$ and $F_2$, and so forth. Each number is the sum of the two terms before it, and this sequence continues to infinity. 

So, starting with $F_0 = 0$ and $F_1 = 1$, we get the sequence:

\[0,1,1,2,3,5,8,13,21,34,55,89...\]
 
This recursive pattern is quite beautiful and can be found in nature numerous times over, from the spirals on a pine cone to the petals of a flower.

Adding up the two previous numbers to get the subsequent term may become ``boring" after a while. What if instead of adding the two previous numbers, the previous three numbers are added, or previous four numbers, or previous $n$ numbers? These variable sequences are defined as $k$-generalized Fibonacci sequences, where $k$ is the numbers of previous terms that are summed to get the next term.

We define the $k$-generalized Fibonacci sequences by the following\footnote{Credit to Parks and Wills\cite{parks2022sums} for this design.}

\[ f_{n}^{k} = \begin{cases}\label{kf.definition} 
          0, & n<0, \\
          1, & n=0, \\
          \sum_{i=1}^{n} f_{n-i}^{(k)}, & n\geq1. \\
       \end{cases}
    \]

This paper is not singularly concerned with $k$-generalized Fibonaccis though. It also concerns the presence of Stirling numbers of the first kind. For $n>0$ and $k>0$, we define the unsigned Stirling numbers of the first kind by the following relation

\begin{equation}
    S(n,k) = (n-1)\cdot S(n-1,k) + S(n-1,k-1).
\end{equation}

While both numbers have recurrence relations, it may seem like these numbers are not related at all. However, when we sum the first $mk$ $k$-bonacci numbers, we can use a formula that involves the use of Stirling numbers, effectively connecting the Stirling numbers to these $k$-bonaccis.

\section{Preliminaries}

This section is split into 2 sections in order to completely describe my independent research into this topic.

\subsection{Initial Observations}

During a previous moment of research, I used Mathematica to generate solutions to a specific $mk$-sum of the $k$-bonaccis, in a general form as so, where each constant $c_{a,b}$ for $a,b\in\mathbb{Z}, a$ denoting the $a-1^{th}$ order polynomial, and $b$ denoting the $b^{th}$ term of the $a$ polynomial it belongs to.

\begin{equation}
    (c_1)2^{mk} + (c_{2,1}n +c_{2,2})2^{(m-1)k}+\dots+(c_{(m,1}n^{m}+\dots+c_{m,m})2^{k}
\end{equation}

This general formula came from the OEIS\cite{oeis}, where I originally looked at sequences with similar linear recurrences as a Fibonacci or 3-bonacci sequence. In this previous research project, my mentor and I settled on powers of 2 and polynomial associated with each power, as in the above form.

Listed below are the Mathematica generated formulas for the $2k$, $3k$, and $4k$ non-zero $k$-bonacci sums respectively.\footnote{When $m=1$, the sum is not very interesting; see Bravo and Luca's paper on $k$-generalized Fibonacci sequences from 2012.}

\begin{equation} \label{2kformula}
    (\frac{1}{2})4^k + (-\frac{1}{4}k+\frac{1}{4})2^k
\end{equation}

\begin{equation} \label{3kform}
  (\frac{1}{2})8^k + (-\frac{1}{2} k + \frac{1}{4})4^k + (\frac{1}{16}k^2 - \frac{3}{16}k +\frac{1}{8})2^k
\end{equation}

\begin{equation} \label{4kform}
    (\frac{1}{2})16^k+(k + \frac{1}{4})8^k + (\frac{1}{4}k^2-\frac{3}{8}k + \frac{1}{8})4^k + (-\frac{1}{96}k^3+\frac{1}{16}k^2-\frac{11}{96}k + \frac{1}{16})2^k 
\end{equation}

Now, to my independent research and calculations. The constants seem to be random, but ordered enough to have patterns. However, I tried writing and rewriting the solutions in order to find a pattern.

In order to organize the constants for each set of constants for a specified $m$-valued ``$mk$-bonacci", a ``$mk$-sum” pyramid, similar to Pascal's triangle is adopted. Pascal’s triangle is a triangular array which organizes the simplified binomial coefficients $n\choose k$ with each row dealing with a specific $n$-value. Each row contains $n$ values, where $n$ describes which row it is from the top of the pyramid. 

To define the constants that satisfy the formula of a sum for a given scalar multiple of $k$, we define the ``$mk$-sum” pyramid as a pyramid so that each row $m$, in numerical order, has $m$ terms. Defining the top row as row 1, each subsequent row of constants is row 2, 3, and so forth, until row $m$. Each row $m$ has the constants in standard form polynomial order of the $(m-1)^{th}$-order polynomial from the respective $mk$-sum formula.

To demonstrate, here is the set of constants for the 6$k$-sum formula.

\begin{figure}[ht]
    \centering{}
    \includegraphics[width=12.1cm]{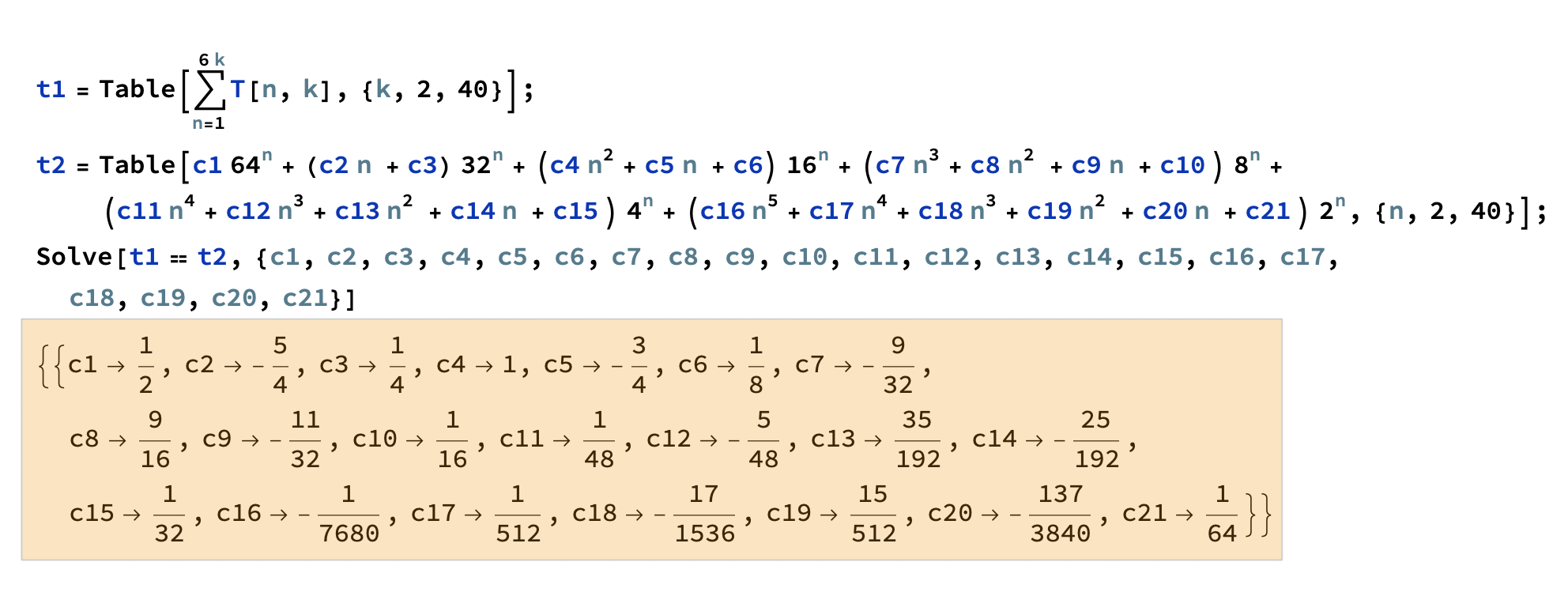}
    \label{6kpicture}
\end{figure}

Here is a pyramid for these constants. On the left for $r\geq1$, $r$ denotes the $r^{th}$ row of the pyramid, in which contain the constants for the respective $(r-1)^{th}$ order polynomial in the formula.

\begin{tabular}{rccccccccccccc}
$r_1$:&    &    &    &    &    &  $\frac{1}{2}$\\\noalign{\smallskip\smallskip}
$r_2$:&    &    &    &    &  $-\frac{5}{4}$ &    &  $\frac{1}{4}$\\\noalign{\smallskip\smallskip}
$r_3$:&    &    &    &  1 &    &  $-\frac{3}{4}$ &    &  $\frac{1}{8}$\\\noalign{\smallskip\smallskip}
$r_4$:&    &    &  $-\frac{9}{32}$ &    &  $\frac{9}{16}$ &    &  $-\frac{11}{32}$ &    &  $\frac{1}{16}$\\\noalign{\smallskip\smallskip}
$r_5$:&    &  $\frac{1}{48}$ &    &  $-\frac{5}{48}$ &    &  $\frac{35}{192}$ &    &  $-\frac{25}{192}$ &    &  $\frac{1}{32}$\\\noalign{\smallskip\smallskip}
$r_6$:&  $-\frac{1}{7680}$ &    &  $\frac{1}{512}$ &    &  $-\frac{17}{1536}$ &    &  $\frac{15}{512}$ &    &  $-\frac{137}{3840}$ &    &  $\frac{1}{64}$\\\noalign{\smallskip\smallskip}  
\end{tabular}

Unlike Pascal’s Triangle, it has been much more difficult to generalize an entry in the pyramid, due to the changing nature of this pyramid for each respective $k$-value. However, a few properties are identifiable through empirical observation.

\textbf{1.} Each entry in the first right diagonal will hold a value defined by $\frac{1}{2^m}$, where $m$ is the number for the row in which it is found.

Recall the general forms of the formulas for sums of 2$k$ integers, sums of 3$k$ integers, and both the 4$k$ and 5$k$ formulas.

\begin{center}
$(\frac{1}{2})4^n + (-\frac{n}{4}+\frac{1}{4})2^n$,for 2$k$ \\
$(\frac{1}{2})8^n + (-\frac{1}{2} n + \frac{1}{4})4^n + (\frac{1}{16}n^2 - \frac{3}{16}n +\frac{1}{8})2^n$,for 3$k$ \\
$(\frac{1}{2})16^n+(n + \frac{1}{4})8^n + (\frac{1}{4}n^2-\frac{3}{8}n + \frac{1}{8})4^n + (-\frac{1}{96}n^3+\frac{1}{16}n^2-\frac{11}{96}n + \frac{1}{16})2^n$,for 4$k$ \\  
\end{center}

There is a unique order polynomial attached to each major power of 2. The constant term, or final constant in the $r^{th}$ polynomial is always in the form $2^{k-r}$, where $r$ specifies the specific $r^{th}$ order polynomial of the formula that the given constant is found in, for each polynomial in the formula.

The common denominator for a given row $r$ is given in the form:
\begin{center}
\[(r-1)!\cdot2^{r} \] for $r\geq1$ 
\end{center}

\begin{proof}
    
Take the 6$k$ pyramid from above. Rewrite every row, with each row rewritten in its respective lowest common denominator.

\begin{figure}[ht]
    \centering{}
    \begin{tabular}{rccccccccccccc}
$r_1$:&    &    &    &    &    &  $\frac{1}{2}$\\\noalign{\smallskip\smallskip}
$r_2$:&    &    &    &    &  $-\frac{5}{4}$ &    &  $\frac{1}{4}$\\\noalign{\smallskip\smallskip}
$r_3$:&    &    &    &  $\frac{16}{16}$ &    &  $-\frac{12}{16}$ &    &  $\frac{2}{16}$\\\noalign{\smallskip\smallskip}
$r_4$:&    &    &  $-\frac{27}{96}$ &    &  $\frac{54}{96}$ &    &  $-\frac{33}{96}$ &    &  $\frac{6}{96}$\\\noalign{\smallskip\smallskip}
$r_5$:&    &  $\frac{16}{768}$ &    &  $-\frac{80}{768}$ &    &  $\frac{140}{768}$ &    &  $-\frac{100}{768}$ &    &  $\frac{24}{768}$\\\noalign{\smallskip\smallskip}
$r_6$:&  $-\frac{1}{7680}$ &    &  $\frac{15}{7680}$ &    &  $-\frac{85}{7680}$ &    &  $\frac{225}{7680}$ &    &  $-\frac{274}{7680}$ &    &  $\frac{120}{7680}$\\\noalign{\smallskip\smallskip}    
\end{tabular}
    \caption{The ``6$k$-sum" pyramid.}
    \label{fig:5}
\end{figure}

Form a sequence of each row's \textbf{lowest common denominator}, in order by row. Each denominator follows the following sequence.

\begin{center}
    $2,4,16,96,768,7680...$
\end{center}

Cross-referencing this with the OEIS\cite{oeis}, and this is sequence \href{https://oeis.org/A066318}{A066318}, where each term is in the form below.

\begin{center}
    $(r-1)!\cdot2^r$ for $r\geq1$
\end{center}
\end{proof}

Otherwise, these numbers fail to follow many other observable patterns. However, there is \textbf{one} observation that is quite remarkable in its uniqueness.

For a given $k$ value, write each entry in the $k^{th}$ row of the respective ``$k$-sum” pyramid for which it appears in lowest common denominator. To demonstrate, we will use row 4 of the 4$k$ pyramid and row 5 of the 5$k$ pyramid. The top sequence is row 4 and the bottom sequence is row 5 for the next few lines.
\begin{center}
    \[-\frac{1}{96}, \frac{1}{16}, -\frac{11}{96}, \frac{1}{16}\]
    \[\frac{1}{768}, -\frac{5}{384}, \frac{35}{768}, -\frac{25}{384}, \frac{1}{32}\]
\end{center}
Rewrite each set of constants in their respective lowest common denominator.
\[-\frac{1}{96}, \frac{6}{96}, -\frac{11}{96}, \frac{6}{96}\]
\[\frac{1}{768}, -\frac{10}{768}, \frac{35}{768}, -\frac{50}{768}, \frac{24}{768}\]

From here, take the \textbf{absolute value} of each term, and form a sequence utilizing the numerator of each fraction.

\[\frac{\textbf{1}}{96}, \frac{\textbf{6}}{96}, \frac{\textbf{11}}{96}, \frac{\textbf{6}}{96}\]
\[\frac{\textbf{1}}{768}, \frac{\textbf{10}}{768}, \frac{\textbf{35}}{768}, \frac{\textbf{50}}{768}, \frac{\textbf{24}}{768}\]

Each respective row above is transformed to the following.

\begin{center}
    \textbf{1, 6, 11,6} for row 4 of 4$k$ \\
    \textbf{1,10,35,50,24} for row 5 of 5$k$
\end{center}

And for each row from row $1$ to row $m$ of an ``$mk$-sum" pyramid, all stitched in order of increasing $m$, the following sequence is made. Note that the following numbers were derived from the constant $c_{a b}$ values that satisfy the general form, in order, from 2$k$ to 6$k$.

\[1, 1, 1, 1, 3, 2, 1, 6, 11, 6, 1, 10, 35, 50, 24, 1, 15, 85, 225, 274, 120\]

Cross-referencing the final sequences, it is shown that the numerators follow the pattern of the \emph{unsigned first-kind Stirling numbers}.\footnote{This sequence can be found on the OEIS\cite{oeis} at \href{https://oeis.org/A094638}{A094638}}.

\subsection{The Formula, Finalized}

I began to revisit this problem in spring 2024. In the form of jotting down a couple more $k$-bonacci pyramids, I noticed different new patterns, all resulting in the additions of term-specific multipliers, a Stirling polynomial, and other factors.

For example, I noticed that there was a pattern that I had completely missed, and reversed the order of each row in the "$mk$-pyramid", so that the Stirling coefficients were now in proper order. 

\begin{theorem}\label{t.main}
    For $k\geq$ 1 and $m\geq$ 1, the sum of the first $mk$ non-zero $k$-bonacci numbers is given by
    \begin{equation}\label{e.main}
        \sum_{i=1}^{m}\sum_{j=1}^{i}\frac{2^{(m-i+1)k}(-1)^{j-1}\cdot(k)^{j-1}\cdot S(i,j)\cdot(m-i+1)^{j-1}}{(i-1)!\cdot2^{i}}
    \end{equation}
    where $S(a,b)$ denotes the $(a,b)$ unsigned Stirling number of the first kind.
\end{theorem}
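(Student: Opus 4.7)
The approach I would take is an algebraic transformation rather than a direct induction on $m$: start from a closed-form expression for $\sum_{i=1}^{mk} f_i^{(k)}$ already established in the literature (for instance the Parks--Wills formula \cite{parks2022sums}, which expresses the sum in the general shape $\sum_{p=1}^{m} \alpha_p(k)\, 2^{pk}$ for explicit polynomials $\alpha_p(k)$ in $k$), and show that the right-hand side of \eqref{e.main} reorganises into the same expression. The reindexing $p = m-i+1$ aligns the factor $2^{(m-i+1)k}$ in \eqref{e.main} with $2^{pk}$, so the theorem reduces to a fixed polynomial identity in $k$ for each $i$: the inner $j$-sum of \eqref{e.main}, divided by $(i-1)!\,2^{i}$, must equal $\alpha_{m-i+1}(k)$.

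The main algebraic ingredient is the classical generating function for the unsigned Stirling numbers of the first kind,
\[
\sum_{j=0}^{i} S(i,j)\, x^{j} \;=\; x(x+1)(x+2)\cdots(x+i-1),
\]
together with $S(i,0)=0$ for $i\geq 1$. Setting $x = -k(m-i+1)$ and noting that $(-1)^{j-1} k^{j-1} (m-i+1)^{j-1} = x^{j-1}$, the inner sum in \eqref{e.main} telescopes to the rising factorial $\prod_{\ell=1}^{i-1}\bigl(\ell - k(m-i+1)\bigr)$. Dividing by $(i-1)!$ packages this as a binomial coefficient $\binom{i-1-k(m-i+1)}{i-1}$, and \eqref{e.main} becomes
\[
\sum_{i=1}^{m} \frac{2^{(m-i+1)k}}{2^{i}} \binom{i-1-k(m-i+1)}{\,i-1\,},
\]
a clean closed form whose coefficient of each $2^{pk}$ can be read off directly as a polynomial in $k$. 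This is where the common-denominator observation $(i-1)!\cdot 2^{i}$ from Section~2.1 also falls out cleanly, since the $(i-1)!$ is absorbed into the binomial coefficient and the $2^{i}$ remains as the denominator.

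The remaining work is term-by-term comparison with the Parks--Wills formula, and I expect this to be the main obstacle rather than the Stirling identity itself: the Parks--Wills expression is stated in a different convention, so lining it up with the binomial form above will likely require a Vandermonde-type reindexing or a short generating-function argument. A secondary care point is the handling of boundary cases: $i=1$, where the rising-factorial product is empty and must be read as $1$; and $j=1$, where the factor $(m-i+1)^{j-1}$ should be interpreted as $1$ even when $i=m$. Both conventions must be invoked uniformly for the Stirling generating function to apply verbatim across the entire summation range, but once in place the identification of \eqref{e.main} with the known formula is purely formal.
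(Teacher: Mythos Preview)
Your approach is essentially the paper's: both start from the Parks--Wills binomial formula and connect it to \eqref{e.main} via the Stirling/falling-factorial identity, the only difference being direction---you collapse the inner Stirling sum to a binomial and then match, while the paper expands the Parks--Wills binomial into Stirling numbers. One remark: the ``main obstacle'' you anticipate is lighter than a Vandermonde argument---after reindexing Parks--Wills by $i=j+1$ you need only the upper-negation identity $\binom{i-1-k(m-i+1)}{i-1}=(-1)^{i-1}\binom{(m-i+1)k-1}{i-1}$, and the two expressions coincide term by term.
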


\section{Proof of Theorem 1}

We will prove this formula in two parts

We begin with the main equation from Parks and Wills\cite{parks2022sums}, namely the following

\begin{equation}\label{e.parks}
    \sum_{j=0}^{\lfloor\frac{n}{k+1}\rfloor} (-1)^{j} \binom{n-jk}{j} 2^{n-j(k+1)}.
\end{equation}

Since our Stirling formula for deals with sums of the first non-zero $mk$, we will replace the bounds of equation (\ref{e.parks}), substituting $mk-1$ for $n$,

\begin{equation}\label{e.parks.bounds1}
    \sum_{j=0}^{\lfloor\frac{mk-1}{k+1}\rfloor} (-1)^{j} \binom{(mk-1)-jk}{j} 2^{mk-1-j(k+1)}.
\end{equation}

Notice that the floor function in the summation reduces to $m-1$. We will also rewrite the power of 2 in the summation to split up the power.

\begin{equation}\label{e.parks.bounds2}
    \sum_{j=0}^{m-1} (-1)^{j} \binom{(mk-1)-jk}{i} 2^{(m-j)k} \cdot 2^{-j-1}.
\end{equation}

So this equation now looks fairly similar to the structure of our Stirling number formula. Notice that with our bounds, left "j" is always equal to right "i"$-1$. Trivially, it is clear that

\begin{equation}\label{trivial}
    \sum_{j=0}^{m-1} 2^{(m-j)k} = \sum_{i=1}^{m} 2^{(m-i+1)k}.
\end{equation}

Since these two are equal, the remaining expressions must be equal. 

\begin{equation}\label{e.finalbijection}
    \sum_{j=0}^{m-1} (-1)^{j} \binom{(mk-1)-jk}{j} 2^{-j-1} = \sum_{i=1}^{m} \sum_{j=1}^{i} \frac{(-1)^{j-1} S(i,j)\cdot(k)^{j-1}\cdot(m-i+1)^{j-1}}{(i-1)!\cdot 2^{i}}
\end{equation}

First, we would like to recall that the Stirling numbers of the first kind are not just numbers that follow a random recurrence relation. They also appears as the coefficients of $x$ in the falling factorial

\begin{equation}\label{fallingfactorial}
    (x)_{n} = x(x-1)(x-2)...(x-n+1)
\end{equation}

The binomial coefficient on the left side, since we focus on sums of any multiple of $k$, can be rewritten as so

\begin{equation}
    \frac{(((m-j)k-1)!}{j!\cdot((m-j)k-j)!}
\end{equation}

We notice that the numerator will simplify to the falling factorial of $((m-j)k-1)_{j}$. 

We can also call on the simple identity which relates the binomial coefficient to the falling factorial,

\begin{equation}
    \binom{n}{k} = \frac{(n)_{k}}{k!}
\end{equation}

With that, we will simplify the left side to 

\begin{equation}
    \sum_{j=0}^{m-1} (-1)^{j}\frac{((m-j)k-1)_{j}}{j!}\cdot2^{-j-1}
\end{equation}

Note that this is most nearly the falling factorial for $(mk)_n$, however with the beginning $mk$ missing. All that changes is that each power of $k$ in its expansion is simply to that power minus 1, what we can call an "near-falling factorial". Before going on, we introduce a final lemma that will enable us to complete this bijection.

\begin{lemma}\label{thelemma}
    For $n\geq1$ and $k\geq1$,
    \begin{equation}
        \sum_{j=0}^{m-1}(-1)^{j}\frac{(((m-j)k-1)_j)}{j!} = \sum_{i=1}^{m}\sum_{j=1}^{i}(-1)^{j-1}\frac{S(i,j)\cdot(k)^{j-1}\cdot(m-i)^{j}}{(i-1)!}
    \end{equation}
\end{lemma}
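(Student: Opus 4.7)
The plan is to reduce the identity to a term-by-term equality in a single reindexing variable, and then derive that equality from the standard falling-factorial expansion in Stirling numbers of the first kind.

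First, I would reindex the right-hand side by setting $s = i-1$ (so $s$ ranges over $\{0,1,\dots,m-1\}$) and $j' = j-1$ (so $j'$ ranges over $\{0,1,\dots,s\}$). Then $S(i,j) = S(s+1,j'+1)$, the product $k^{j-1}(m-i+1)^{j-1}$ clumps into $\bigl((m-s)k\bigr)^{j'}$, and $(i-1)!$ becomes $s!$. Writing $a := (m-s)k$, the right-hand side takes the form
\[
\sum_{s=0}^{m-1}\frac{1}{s!}\sum_{j=0}^{s}(-1)^{j}\,S(s+1,\,j+1)\,a^{j}.
\]
The left-hand side is already a single sum indexed by $s$, with $s$-th summand $(-1)^{s}(a-1)_{s}/s!$ using the same value $a = (m-s)k$. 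Hence it suffices to establish, for every integer $s\ge 0$, the polynomial identity
\[
(a-1)_{s} \;=\; (-1)^{s}\sum_{j=0}^{s}(-1)^{j}\,S(s+1,\,j+1)\,a^{j}
\]
in the indeterminate $a$.

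To prove this polynomial identity I would start from the classical expansion
$(a)_{n} = \sum_{k=0}^{n}(-1)^{n-k}\,S(n,k)\,a^{k}$,
which holds because the signed Stirling numbers $s(n,k) = (-1)^{n-k}S(n,k)$ are the coefficients of the falling factorial $(a)_n$, and the paper's $S(n,k)$ is the unsigned version (the recurrence $S(n,k) = (n-1)S(n-1,k)+S(n-1,k-1)$ given in the introduction produces precisely those). Applying this at $n=s+1$ and observing that $S(s+1,0)=0$ for $s\ge 0$, I can divide by $a$ to obtain
$(a-1)_{s} = (a)_{s+1}/a = \sum_{j=0}^{s}(-1)^{s-j}\,S(s+1,j+1)\,a^{j}$,
and factoring $(-1)^{s}$ out front gives the target identity. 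Substituting $a=(m-s)k$ and summing over $s$ then closes the lemma.

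The main obstacle is really just spotting the clumping $a = (m-s)k$: until this variable is introduced the right-hand side looks like a genuine bivariate double sum, but once one sees that the inner $j$-sum is a polynomial evaluation at $a$ of a polynomial depending only on $s$, the identity reduces to a one-line manipulation of the standard falling-factorial/Stirling expansion. A small verification point is simply to confirm that the unsigned/signed sign conventions for $S$ match up with $(-1)^{n-k}$ in the falling-factorial expansion, so that the alternating signs on the two sides of the proposed polynomial identity align as claimed.
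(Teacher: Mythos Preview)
Your argument is correct and follows essentially the same route as the paper: both proofs rest on expanding the falling factorial $((m-j)k-1)_j$ via the Stirling-number expansion of $(a)_{s+1}$ and reading off the coefficients term by term; your reindexing $s=i-1$, $j'=j-1$ and the clumping $a=(m-s)k$ make this explicit and rigorous, whereas the paper argues the same point more informally through worked examples before writing down the general expansion.

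One discrepancy worth flagging: you reindex the right-hand side using the factor $k^{j-1}(m-i+1)^{j-1}$, which is the expression appearing in Theorem~\ref{t.main}, not the factor $k^{j-1}(m-i)^{j}$ printed in the lemma statement. Your version is the one that actually makes the identity true (and is what is needed for the proof of Theorem~\ref{t.main}); with $(m-i)^{j}$ the two sides already disagree at $m=2$. So strictly speaking you have proved the intended lemma rather than the one literally stated, and it would be worth saying so explicitly.
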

\begin{proof}
    Take the example of the following expansions of $((m-j)k-1)_3$ $(k-1)(k-2)$, $(2k-1)(2k-2)$, and $(3k-1)(3k-2)$. This polynomial corresponds to the Stirling numbers with $n=3$, with $S(3,1)=2$, $S(3,2)=-3$, and $S(3,3)=1$.
    \begin{center}
        \begin{tabular}{ r|c} 
            $m-j$ & expansion\\[1.5ex]
            \hline \\
            1 & $(k-1)(k-2) = 2-3k+k^{2}$ \\ [3.5ex]
            2 & $(2k-1)(2k-2) = 2-6k+4k^{2}$ \\ [3.5ex]
            3 & $(3k-1)(3k-2) = 2-9k+9k^{2}$ \\ [3.5ex]
        \end{tabular}
    \end{center}
    Notice how for each new multiple of $k$, each term simply multiplies by $m-j$ to the power that correspond to the power of $k$ in that term. To clarify notice the following,
    \begin{center}
        \begin{tabular}{ r|c} 
            $m-j$ & expansion\\[1.5ex]
            \hline \\
            1 & $(k-1)(k-2) = (1^{0})2-(1^{1})3k+(1^{2})k^{2}$ \\ [3.5ex]
            2 & $(2k-1)(2k-2) = (2^{0})2-(2^{1})3k+(2^{2})1k^{2}$ \\ [3.5ex]
            3 & $(3k-1)(3k-2) = (3^{0})2-(3^{1})3k+(3^{2})1k^{2}$ \\ [3.5ex]
        \end{tabular}
    \end{center}

    If we expand $(k-1)_n$, then by definition, we get

    \begin{equation}
        s(n,1)k^{0}+s(n,2)k^{1}+\dots+s(n,n)k^{n}.
    \end{equation}

    where $s(n,k)$ is the \textit{signed} Stirling number of the first kind.
    
    In this case, $m-j$ is 1. So whenever we expand $(k-1)_j$, we can generalize for $((m-j)k-1)_j$ by substituting in $(m-j)k$ for any term that includes a $k$, hence the expansion becomes
    
    \begin{equation}
        (m-j)^{0}S(j,1)k^{0}+(m-j)^{1}S(j,2)k^{1}+\dots+(m-j)^{n}S(j,n)k^{n-1}.
    \end{equation}

    The $n$ variable we use is simply a placeholder that increments each power of $k$ and increments through all Stirling numbers in the form $S(j,n)$ where $1\leq n \leq j$. However, to match our Stirling formula, we change the $j$ variable to an $i$,  turn the $n$ placeholder into a $j$, and sum all values of $j$ from 1 to $i$

    \begin{equation}\label{finaleq}
        \sum_{i=0}^{m-1}(-1)^{i}\sum_{j=0}^{i}\frac{(m-i)^{j}s(i,j)\cdot(k)^{j-1}}{j!} = \sum_{i=1}^{m}\sum_{j=1}^{i}\frac{(-1)^{j-1}S(i,j)\cdot(k)^{j-1}\cdot(m-i)^{j}}{(i-1)!}
    \end{equation}

    The bounds may differ, but they cycle through the same set of numbers, just as in \ref{trivial} above. Notice also that both alternating $-1$s on either side of the equation achieve the same purposes. The left hand side simply distributes the $-1$ across the entire "near-falling factorial" whereas the right side applies the power of $-1$ to each term of the expanded "near-falling factorial" directly, instead of first signing the Stirling number and applying the $-1$ to the entire polynomial, alternating for each successive polynomial.
    
    With that, we complete the proof of \ref{thelemma}, the other half of the formula.
\end{proof}

Combining equations \ref{trivial} and \ref{thelemma}, the proof is complete, and we have proven that our experimentally determined formula is a deconstruction of a more-closed binomial formula. Recall \ref{t.main} and \ref{e.parks.bounds2}, and know that $2^{-j-1}$ and $\frac{1}{2^i}$, are the same with the different bounds.

\subsection{The implications of this}

Before writing this paper, I did not realize that this problem would have a simpler answer than I expected. However, the existence of Stirling numbers appearing in the binomial coefficients implies that Stirling numbers have a larger role in combinatorial formulas.

It does open the question of whether there is a proper combinatorial interpretation of my Stirling formula. However, that is beyond the scope of this paper. While the factorial has a simple combinatorial argument, the Stirling polynomial's combinatorial interpretation would definitely be harder to come to.

\section{Acknowledgements}

This paper functions as an independent continuation of the previous paper I wrote for Pioneer Academics with the help of Professor Greg Dresden of Washington and Lee University. Before anything, I would like to acknowledge his help in the earlier paper, and his current encouragement in proving the resulting summation formula.

\nocite{*} 
\bibliographystyle{plain}
\bibliography{references}

\end{document}